\newtheorem{theorem}{Theorem}
\newtheorem{proposition}[theorem]{Proposition}
\newenvironment{proof}[1][Proof]{\noindent\textbf{#1.} }{\ \rule{0.5em}{0.5em}}
\begin{document}

\title{A Note on Global Suprema of Band-Limited Spherical Random Functions }
\author{Domenico Marinucci\thanks{%
Corresponding author; email address marinucc@mat.uniroma2.it. Research
supported by the ERC Grants n. 277742 \emph{Pascal,} "Probabilistic and
Statistical Techniques for Cosmological Applications".} and Sreekar
Vadlamani \\
Department of Mathematics, University of Rome Tor Vergata and\\
Tata Institute for Fundamental Research, Bangalore}
\maketitle

\begin{abstract}
In this note, we investigate the behaviour of suprema for band-limited
spherical random fields. We prove upper and lower bound for the expected
values of these suprema, by means of metric entropy arguments and discrete
approximations; we then exploit the Borell-TIS inequality to establish
almost sure upper and lower bounds for their fluctuations. Band limited
functions can be viewed as restrictions on the sphere of random polynomials
with increasing degrees, and our results show that fluctuations scale as the
square root of the logarithm of these degrees.

\begin{itemize}
\item Keywords and Phrases: Spherical Random Fields, Suprema, Metric
Entropy, Almost Sure Convergence

\item AMS Classification: 60G60; 62M15, 53C65, 42C15
\end{itemize}
\end{abstract}

\section{Introduction}

The analysis of the behaviour of suprema of Gaussian processes is one of the
classical topics in probability theory (\cite{RFG},\cite{azaisbook}); in
this note, we shall be concerned with suprema of band-limited random fields
defined on the unit sphere $S^{2}$. More precisely, let $T:S^{2}\times
\Omega \rightarrow \mathbb{R}$ be a measurable zero mean, finite variance
Gaussian field defined on for some probability space $\left\{ \Omega ,\Im
,P\right\} ;$ we assume $T(.)$ is isotropic, e.g. the vectors%
\[
\left\{ T(x_{1}),...,T(x_{k})\right\} \text{ and }\left\{
T(gx_{1}),...,T(gx_{k})\right\} 
\]%
have the same law, for all $k\in \mathbb{N}$, $x_{1},...,x_{k}\in S^{2}$ and 
$g\in SO(3),$ the group of rotations in $\mathbb{R}^{3}.$ It is then known
that the field $\left\{ T(.)\right\} $ is necessarily mean square continuous
(\cite{mp2012}) and the following spectral representation holds:%
\[
T(x)=\sum_{\ell =0}\sum_{m=-\ell }^{\ell }a_{\ell m}Y_{\ell m}(x), 
\]%
where the \emph{spherical harmonics }$\left\{ Y_{\ell m}\right\} $ form an
orthonormal systems of eigenfunctions of the spherical Laplacian, $\Delta
_{S^{2}}Y_{\ell m}=-\ell (\ell +1)Y_{\ell m}$ (see \cite{steinweiss},\cite%
{marpecbook})$,$ while the random coefficients $\left\{ a_{\ell m}\right\} $
form a triangular array of complex-valued, zero-mean, uncorrelated Gaussian
variables with variance $E\left\vert a_{\ell m}\right\vert ^{2}=C_{\ell },$
the \emph{angular power spectrum of the field. }In the sequel, we shall
adopt the following general model for the behavious of $\left\{ C_{\ell
}\right\} ;$ as $\ell \rightarrow \infty ,$ there exist $\alpha >2$ and a
positive rational function $G(\ell )$ such that 
\begin{equation}
C_{\ell }=G(\ell )\ell ^{-\alpha },\,\,\,\,0<c_{1}<G(\ell )<c_{2}<\infty
\label{usucon}
\end{equation}%
Spherical random fields have recently drawn a lot of applied interest,
especially in an astrophysical environment (see \cite{bennett2012}, \cite%
{marpecbook}); closed form expressions for the density of their maxima and
for excursion probabilities have been given in (\cite{ChengSchwar},\cite%
{chengxiao},\cite{MarVad}). In particular, the latter references exploit the
Gaussian Kinematic Fundamental formula by Adler and Taylor (see \cite{RFG})
to approximate excursion probabilities by means of the expected value of the
Euler-Poincar\`{e} characteristic for excursion sets. It is then easy to
show that%
\[
{\mathbb{E}}\mathcal{L}_{0}(A_{u}(T))=2\left\{ 1-\Phi (u)\right\} +4\pi
\left\{ \sum_{\ell }\frac{2\ell +1}{4\pi }C_{\ell }\frac{\ell (\ell +1)}{2}%
\right\} \frac{u\phi (u)}{\sqrt{(2\pi )^{3}}}, 
\]%
where $\phi ,\Phi $ denote density and distribution function of a standard
Gaussian variable, while $A_{u}(T):=\left\{ x\in S^{2}:T(x)\geq u\right\} .$
It is also an easy consequence of results in Ch.14 of \cite{RFG} that there
exist $\alpha >1$ and $\mu ^{+}>0$ such that, for all $u>\mu ^{+}$%
\begin{equation}
\left\vert {\mathbb{P}}\left\{ \sup_{x\in S^{2}}T(x)>u\right\} -2\left\{
(1-\Phi (u)+u\phi (u)\lambda \right\} \right\vert \leq \left\{ 4\pi \lambda
\right\} \exp (-\frac{\alpha u^{2}}{2}),  \label{before}
\end{equation}
where%
\[
\lambda :=\sum_{\ell }\frac{2\ell +1}{4\pi }C_{\ell }\frac{\ell (\ell +1)}{2}%
, 
\]%
denotes the derivative of the covariance function at the origin, see again (%
\cite{ChengSchwar},\cite{chengxiao},\cite{MarVad}).

When working on compact domains as the sphere, it is often of great interest
to focus on sequences of band-limited random fields; for instance, a very
powerful tool for data analysis is provided by fields which can be viewed as
a sequence of wavelet transforms (at increasing frequencies) of a given
isotropic spherical field $T.$ More precisely, take $b(.)$ to be a $%
C^{\infty }$ function, compactly supported in $[\frac{1}{2},2];$ having in
mind the wavelets interpretation, it would be natural to impose the
partition of unity property $\sum_{\ell }b^{2}(\frac{\ell }{2^{j}})\equiv 1,$
but this condition however plays no role in our results to follow. Let us
now focus on the sequence of band-limited spherical random fields%
\[
\beta _{j}(x):=\sum_{\ell =2^{j-1}}^{2^{j+1}}b(\frac{\ell }{2^{j}})a_{\ell
m}Y_{\ell m}(x), 
\]%
which have a clear interpretation as wavelet components of the original
field, and as such lend themselves to a number of statistical applications,
see for instance \cite{bkmpAoS}, \cite{cammar},\cite{npw1},\cite{pietrobon1}%
. Band-limited spherical fields have also been widely studied in other
context of mathematical physics, although in such cases $b(.)$ is not
necessarily assumed to be smooth, see for instance \cite{zelditch} and the
references therein.

In the sequel, it will be convenient to normalize the variance of $\left\{
\beta _{j}(x)\right\} $ to unity, and thus focus on 
\[
\widetilde{\beta }_{j}(x):=\frac{\beta _{j}(x)}{\sqrt{\sum_{\ell }b^{2}(%
\frac{\ell }{2^{j}})\frac{2\ell +1}{4\pi }C_{\ell }}}\text{ .}
\]%
The sequence of fields $\left\{ \widetilde{\beta }_{j}(x)\right\} $ has
covariance functions%
\[
\rho _{j}(x,y)=\frac{\sum_{\ell }b^{2}(\frac{\ell }{2^{j}})\frac{2\ell +1}{%
4\pi }C_{\ell }P_{\ell }(\left\langle x,y\right\rangle )}{\sum_{\ell }b^{2}(%
\frac{\ell }{2^{j}})\frac{2\ell +1}{4\pi }C_{\ell }}
\]%
and second spectral moments%
\[
\lambda _{j}:=\frac{\sum_{\ell }b^{2}(\frac{\ell }{2^{j}})\frac{2\ell +1}{%
4\pi }C_{\ell }P_{\ell }^{\prime }(1)}{\sum_{\ell }b^{2}(\frac{\ell }{2^{j}})%
\frac{2\ell +1}{4\pi }C_{\ell }}=\frac{\sum_{\ell }b^{2}(\frac{\ell }{2^{j}})%
\frac{2\ell +1}{4\pi }C_{\ell }\frac{\ell (\ell +1)}{2}}{\sum_{\ell }b^{2}(%
\frac{\ell }{2^{j}})\frac{2\ell +1}{4\pi }C_{\ell }},
\]%
see \cite{MarVad}. For fixed $j,$ as in (\ref{before}) it follows from
results in \cite{RFG} that there exist $\alpha >1$ and $\mu ^{+}>0$ such
that, for all $u>\mu ^{+}$%
\begin{equation}
\left\vert {\mathbb{P}}\left\{ \sup_{x\in S^{2}}\widetilde{\beta }%
_{j}(x)>u\right\} -2\left\{ (1-\Phi (u)+u\phi (u)\lambda _{j}\right\}
\right\vert \leq \left\{ 4\pi \lambda _{j}\right\} \exp (-\frac{\alpha u^{2}%
}{2}).  \label{sabato}
\end{equation}%
However, here for $j\rightarrow \infty $ we also have $\lambda
_{j}\rightarrow \infty ,$ whence the previous result clearly becomes
meaningless. Intuitively, sample paths become rougher and rougher as $j$
grows, hence any fixed threshold is crossed with probability tending to one.
In \cite{MarVad}, uniform bounds for band-limited fields have indeed been
established, covering even nonGaussian circumstances; however these bounds
require a further averaging in the space domain for the fields considered,
and this averaging ensures the uniform boundedness of $\lambda _{j};$ in
these circumstances, the multiplicative constant on the right-hand side of (%
\ref{sabato}) can be simply incorporated into the exponential choosing a
different constant $1<\alpha ^{\prime }<\alpha .$

There is, however, a question that naturally arises for the cases where $%
\lambda _{j}$ diverges - e.g., whether it is possible to provide bounds on
global suprema, allowing the thresholds to grow with frequency. This is a
natural question for a number of statistical applications, for instance when
considering thresholding estimates or multiple testing. Loosely speaking,
the issue we shall be concerned with is then related to the existence of a
growing sequence $\tau _{j}$ and positive constants $c_{1},c_{2},c_{1}^{%
\prime },c_{2}^{\prime }$ such that 
\[
c_{1}\leq {\mathbb{E}}\left\{ \frac{\sup_{x\in S^{2}}\widetilde{\beta }%
_{j}(x)}{\tau _{j}}\right\} \leq c_{2},\text{ and /or }{\mathbb{P}}\left\{
c_{1}^{\prime }\leq \frac{\sup_{x\in S^{2}}\widetilde{\beta }_{j}(x)}{\tau
_{j}}\leq c_{2}^{\prime }\right\} =1 
\]%
In fact, we shall be able to be more precise with our lower bounds. To make
this statement more precise, it will be convenient to write $\ell
_{j}:=2^{j};$ we shall then establish the following

\begin{theorem}
There exist positive constants $\gamma _{1},\gamma _{2}\geq 1,$ such that%
\[
1\leq \liminf_{j}\frac{{\mathbb{E}}\left\{ \sup_{x\in S^{2}}\widetilde{\beta 
}_{j}(x)\right\} }{\sqrt{4\log \ell _{j}}}\leq \limsup_{j}\frac{{\mathbb{E}}%
\left\{ \sup_{x\in S^{2}}\widetilde{\beta }_{j}(x)\right\} }{\sqrt{4\log
\ell _{j}}}\leq \gamma _{1}, 
\]%
and%
\[
{\mathbb{P}}\left( 1\leq \liminf_{j}\frac{\left\{ \sup_{x\in S^{2}}%
\widetilde{\beta }_{j}(x)\right\} }{\sqrt{4\log \ell _{j}}}\leq \limsup_{j}%
\frac{\left\{ \sup_{x\in S^{2}}\widetilde{\beta }_{j}(x)\right\} }{\sqrt{%
4\log \ell _{j}}}\leq \gamma _{2}\right) =1. 
\]
\end{theorem}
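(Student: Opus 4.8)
The plan is to estimate $M_j:=\mathbb{E}\{\sup_{x\in S^{2}}\widetilde\beta_j(x)\}$ and then to promote that estimate to an almost sure statement via the Borell--TIS inequality. Everything is governed by the canonical (Dudley) metric $d_j(x,y):=(\mathbb{E}[(\widetilde\beta_j(x)-\widetilde\beta_j(y))^{2}])^{1/2}=\sqrt{2(1-\rho_j(\langle x,y\rangle))}$, which by isotropy depends only on the angle $\theta$ between $x$ and $y$. Expanding $P_\ell(\cos\theta)=1-\frac{\ell(\ell+1)}{4}\theta^{2}+o(\theta^{2})$ and using (\ref{usucon}) gives $1-\rho_j(\cos\theta)\asymp\lambda_j\theta^{2}$ for $\theta\lesssim\ell_j^{-1}$, with $\lambda_j\asymp\ell_j^{2}$; hence $d_j$ is comparable to $\ell_j$ times the geodesic distance and the $d_j$-covering number obeys $N(\epsilon)\le C(\ell_j/\epsilon)^{2}$ below the ($O(1)$) diameter. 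Since $\widetilde\beta_j$ carries $\sum_{2^{j-1}\le\ell\le 2^{j+1}}(2\ell+1)\asymp\ell_j^{2}$ harmonics, the heuristic $\sqrt{2\log(\#\text{degrees of freedom})}=\sqrt{2\log\ell_j^{2}}=\sqrt{4\log\ell_j}$ already singles out the normalization. For the upper bound I would apply Dudley's entropy integral, $M_j\le C\int_{0}^{\mathrm{diam}}\sqrt{\log N(\epsilon)}\,d\epsilon$; inserting $N(\epsilon)\le C(\ell_j/\epsilon)^{2}$ and integrating over $\epsilon\in(0,\sqrt{2}\,]$ yields $M_j\le C'\sqrt{\log\ell_j}$, so $\limsup_j M_j/\sqrt{4\log\ell_j}\le\gamma_1$ for some finite $\gamma_1$ (non-sharpness of the Dudley constant is harmless, as only boundedness is asserted, and $\gamma_1\ge 1$ follows a posteriori from the lower bound).

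The sharp lower bound I would obtain by a discrete approximation together with Slepian's inequality. Fix $\eta\in(0,2)$ and pick a maximal $\delta_j$-separated set $\{x_i\}_{i=1}^{N_j}\subset S^{2}$ with $\delta_j:=\ell_j^{-1+\eta/2}$, so that $N_j\asymp\delta_j^{-2}\asymp\ell_j^{2-\eta}$. The analytic input is a decay estimate for the normalized kernel: the classical bound $|P_\ell(\cos\theta)|\le C(\ell\theta)^{-1/2}$ for $\theta\gtrsim\ell^{-1}$ gives $r_j:=\max_{i\ne k}|\rho_j(\langle x_i,x_k\rangle)|\le C\ell_j^{-\eta/4}\to 0$. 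Now set $Y_i:=\sqrt{1-r_j}\,Z_i+\sqrt{r_j}\,Z_0$ with $Z_0,Z_1,\dots,Z_{N_j}$ independent standard Gaussian; then $\mathrm{Var}\,Y_i=1$ and $\mathrm{Cov}(Y_i,Y_k)=r_j\ge\rho_j(\langle x_i,x_k\rangle)$ for $i\ne k$, so Slepian's inequality yields $\mathbb{E}\{\max_i\widetilde\beta_j(x_i)\}\ge\mathbb{E}\{\max_i Y_i\}=\sqrt{1-r_j}\,\mathbb{E}\{\max_i Z_i\}$, the common term $\sqrt{r_j}\,Z_0$ dropping out. Since $\mathbb{E}\{\max_i Z_i\}=(1-o(1))\sqrt{2\log N_j}$ and $\sup_x\widetilde\beta_j(x)\ge\max_i\widetilde\beta_j(x_i)$, we get $M_j\ge(1-o(1))\sqrt{1-r_j}\,\sqrt{(2-\eta)\cdot 2\log\ell_j}$, whence $\liminf_j M_j/\sqrt{4\log\ell_j}\ge\sqrt{1-\eta/2}$; letting $\eta\downarrow 0$ gives $\liminf_j M_j/\sqrt{4\log\ell_j}\ge 1$.

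The almost sure statements then follow from Borell--TIS. As $\mathrm{Var}\,\widetilde\beta_j\equiv 1$, we have $\mathbb{P}(|\sup_x\widetilde\beta_j(x)-M_j|>t)\le 2e^{-t^{2}/2}$. Choosing $t_j\to\infty$ with $t_j=o(\sqrt{\log\ell_j})$ yet $\sum_j e^{-t_j^{2}/2}<\infty$ (for instance $t_j=2\sqrt{\log j}$, so $e^{-t_j^{2}/2}=j^{-2}$ while $t_j=o(\sqrt{j\log 2})=o(\sqrt{\log\ell_j})$), Borel--Cantelli gives that almost surely $|\sup_x\widetilde\beta_j(x)-M_j|\le t_j$ for all large $j$. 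Dividing by $\sqrt{4\log\ell_j}$ and combining with the two expectation bounds produces, almost surely, $1\le\liminf_j\{\sup_x\widetilde\beta_j(x)\}/\sqrt{4\log\ell_j}\le\limsup_j\{\sup_x\widetilde\beta_j(x)\}/\sqrt{4\log\ell_j}\le\gamma_2$, with $\gamma_2$ of the same order as $\gamma_1$.

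The step I expect to be the main obstacle is the lower bound, specifically the uniform-in-$j$ decay estimate for the normalized spectral kernel $\rho_j$ away from the diagonal: one must control how fast the chosen points decorrelate, and then verify that $\delta_j$ can be kept small enough to force $r_j\to 0$ while the cardinality $N_j$ stays as close to $\ell_j^{2}$ as desired, which is precisely what allows the sharp constant $1$ to be recovered. The upper bound and the passage to almost sure bounds are, by comparison, routine.
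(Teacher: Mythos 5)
Your upper bound is exactly the paper's argument: the Dudley metric is shown (via Hilb/local expansion of $P_\ell$) to be comparable to $\ell_j$ times the spherical distance, so $N_j(\varepsilon)\lesssim (\ell_j/\varepsilon)^2$ and the entropy integral gives $c\sqrt{4\log \ell_j}$. For the lower bound and the almost sure statement, however, you take a genuinely different route. The paper discretizes on a $2^{-j(1-\delta)}$-net, invokes the needlet correlation inequality (Lemma 10.8 of \cite{marpecbook}, which rests on the smoothness of $b$) to get \emph{super-polynomial} decay of the sampled correlations, approximates the sampled vector by $\widehat{\beta}_{j,\cdot}=\Sigma_j^{-1/2}\widetilde{\beta}_{j,\cdot}$ with explicit eigenvalue control on $\Sigma_j$, and then applies Berman's law of large numbers \cite{berman}; the a.s.\ lower bound is a separate Borel--Cantelli argument via Mill's inequality, and the a.s.\ upper bound a separate Borell--TIS/Borel--Cantelli argument with constant $C+1/\sqrt{2}$. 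Your route --- Slepian comparison against the exchangeable vector $Y_i=\sqrt{1-r_j}\,Z_i+\sqrt{r_j}\,Z_0$, plus two-sided Borell--TIS concentration around the mean with $t_j=2\sqrt{\log j}=o(\sqrt{\log \ell_j})$ --- is leaner: it needs only $r_j\to 0$ rather than super-polynomial decorrelation, avoids the $\Sigma_j^{-1/2}$ bookkeeping and Berman's theorem, produces the expectation lower bound directly (the paper has to pass from convergence in probability to expectations), and transfers both expectation bounds to a.s.\ bounds in one stroke, giving $\gamma_2=\gamma_1$.

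There is, though, one step that is wrong as stated, and it is precisely the one you flagged as the likely obstacle: the uniform decorrelation estimate. The Bernstein-type bound for Legendre polynomials is $|P_\ell(\cos\theta)|\le C(\ell\sin\theta)^{-1/2}$, with $\sin\theta$ and not $\theta$; it degenerates as $\theta\to\pi$, and indeed $P_\ell(-1)=(-1)^\ell$, so $|P_\ell(\cos\theta)|$ is of order $1$ for nearly antipodal pairs. A maximal $\delta_j$-separated set on the whole sphere certainly contains such pairs, so $r_j:=\max_{i\neq k}|\rho_j(\langle x_i,x_k\rangle)|\le C\ell_j^{-\eta/4}$ does not follow from term-by-term estimates. (The kernel $\rho_j$ \emph{is} in fact small near $\theta=\pi$, but only because of cancellations produced by the smoothness of $b$ --- this is exactly the content of the localization lemma the paper invokes, and it cannot be seen from $|P_\ell|$ alone.) The fix costs nothing: place your $\delta_j$-separated set inside a fixed spherical cap of geodesic radius $\pi/4$, so that all pairwise angles lie in $[\delta_j,\pi/2]$, where $\sin\theta\ge 2\theta/\pi$ and your bound is legitimate; the cardinality is still $N_j\asymp\delta_j^{-2}=\ell_j^{2-\eta}$ up to a constant factor, which disappears after taking logarithms, so the sharp constant $1$ survives and the rest of your argument goes through unchanged.
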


The corresponding upper bounds are proved in Section 2, while the proofs for
the lower bounds are collected in Section 3.

The random functions $\left\{ \widetilde{\beta }_{j}(.)\right\} $ can be
viewed as restrictions to the sphere of linear combinations of polynomials
with increasing degree $p_{j}=2\ell _{j}$ (\cite{marpecbook})$.$ Our results
can then be summarized by simply stating that as $j\rightarrow \infty ,$ the
supremum of $\left\{ \widetilde{\beta }_{j}(.)\right\} $ grows as twice the
square root of the logarithm of $p_{j}$.

\section{Metric Entropy and Upper Bounds}

The result we shall give in this Section is the following.\bigskip

\begin{proposition}
There exist a positive constant $c$ such that, for all $j\in \mathbb{N}$%
\begin{equation}
{\mathbb{E}}\left\{ \sup_{x\in S^{2}}\widetilde{\beta }_{j}(x)\right\} \leq c%
\sqrt{4\log \ell _{j}}.  \label{1.1}
\end{equation}%
Moreover there exist another positive constant $C$ such that%
\begin{equation}
{\mathbb{P}}\left( \limsup_{j}\frac{\left\{ \sup_{x}\widetilde{\beta }%
_{j}(x)\right\} }{\sqrt{4\log \ell _{j}}}\leq C+\frac{1}{\sqrt{2}}\right) =1.
\label{1.3}
\end{equation}
\end{proposition}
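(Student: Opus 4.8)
The plan is to handle the two assertions with a common Gaussian-process toolkit: Dudley's metric entropy bound for the expectation in (\ref{1.1}), and the Borell--TIS inequality together with Borel--Cantelli for the almost sure statement (\ref{1.3}). The problem in the first part reduces to estimating the covering numbers of $S^2$ in the canonical (Dudley) metric
\[
d_j(x,y) := \left( \mathbb{E}\left[ (\widetilde{\beta}_j(x) - \widetilde{\beta}_j(y))^2 \right] \right)^{1/2} = \sqrt{2(1 - \rho_j(x,y))}.
\]

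First I would establish a Bernstein-type bound relating $d_j$ to the geodesic distance $\theta = \arccos\langle x,y\rangle$. Writing $1-\rho_j$ through the addition theorem, the key ingredient is the elementary inequality $1 - P_\ell(\cos\theta) \le c\,\ell(\ell+1)\,\theta^2$ for the frequencies $\ell \in [2^{j-1}, 2^{j+1}]$ carried by the band. Summing this against the weights $b^2(\ell/2^j)\tfrac{2\ell+1}{4\pi}C_\ell$ and dividing by the same normalizing sum yields $1 - \rho_j(x,y) \le c\,\lambda_j\,\theta^2$; since every frequency in the band satisfies $\ell \asymp \ell_j$ one has $\lambda_j \asymp \ell_j^2$, whence $d_j(x,y) \le C\,\ell_j\,\theta$. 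Consequently a geodesic ball of radius $\epsilon/(C\ell_j)$ sits inside a $d_j$-ball of radius $\epsilon$, so $N(S^2, d_j, \epsilon) \le A\,\ell_j^2/\epsilon^2$, while the $d_j$-diameter is at most $2$ by unit variance. Dudley's bound (see \cite{RFG}) then gives
\[
\mathbb{E}\Big\{\sup_{x} \widetilde{\beta}_j(x)\Big\} \le K\int_0^2 \sqrt{\log\!\big(A\,\ell_j^2/\epsilon^2\big)}\,d\epsilon,
\]
and since the integrand is dominated by $\sqrt{2\log\ell_j}$ with an integrable remainder of lower order, the right-hand side is $\le c\sqrt{4\log\ell_j}$ for a suitable universal $c$, proving (\ref{1.1}).

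For (\ref{1.3}), write $M_j := \sup_x \widetilde{\beta}_j(x)$. As each field has unit variance, the Borell--TIS inequality (\cite{RFG}) gives $\mathbb{P}(M_j - \mathbb{E}M_j > u) \le \exp(-u^2/2)$. Choosing $u = u_j := \tfrac{1}{\sqrt{2}}\sqrt{4\log\ell_j}$ makes $u_j^2/2 = \log\ell_j$, so the probability is at most $1/\ell_j = 2^{-j}$, which is summable. By Borel--Cantelli, almost surely $M_j \le \mathbb{E}M_j + u_j$ for all large $j$; dividing by $\sqrt{4\log\ell_j}$ and invoking (\ref{1.1}) yields $\limsup_j M_j/\sqrt{4\log\ell_j} \le C + \tfrac{1}{\sqrt{2}}$ almost surely, with $C := \limsup_j \mathbb{E}M_j/\sqrt{4\log\ell_j}$, which is finite by the first part.

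The main obstacle is the metric entropy step, namely the Bernstein bound $d_j(x,y)\le C\ell_j\theta$ uniform over the band and over all pairs $x,y$: one must control $1 - P_\ell(\cos\theta)$ uniformly in $\ell \asymp \ell_j$ (the regime of large $\theta$ being absorbed by the trivial bound $d_j \le 2$) and verify that the weighted averaging defining $\rho_j$ and $\lambda_j$ preserves the quadratic-in-$\theta$ control with constants independent of $j$. Once this is secured, both the Dudley integral and the Borell--TIS argument are routine.
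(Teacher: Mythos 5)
Your proposal is correct and follows essentially the same route as the paper: a covering-number estimate in the canonical metric $d_j$ feeding into Dudley's entropy bound for (\ref{1.1}), then Borell--TIS with threshold $\tfrac{1}{\sqrt{2}}\sqrt{4\log \ell_j}$ plus Borel--Cantelli for (\ref{1.3}). The only (minor) divergence is that you obtain the comparison $d_j(x,y)\leq C\ell_j\theta$ from the elementary bound $1-P_\ell(\cos\theta)\leq c\,\ell(\ell+1)\theta^2$ via $|P_\ell'|\leq P_\ell'(1)$, whereas the paper invokes Hilb's asymptotics; your shortcut suffices here because the entropy upper bound only needs the one-sided estimate, while the paper's two-sided Hilb bound is what its later lower-bound arguments rely on.
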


\begin{proof}
(\ref{1.1}) Define the canonical (Dudley) metric on $S^{2}$ as follows: 
\[
d_{j}(x,y)=\sqrt{{\mathbb{E}}\left( \widetilde{\beta }_{j}(x)-\widetilde{%
\beta }_{j}(y)\right) ^{2}}=\sqrt{2-2\rho _{j}(x,y)},
\]%
see \cite{RFG}. Note that since $\widetilde{\beta }_{j}(x)$ is isotropic,
all the distances can be measured from one fixed point (say the north pole).
Therefore, 
\[
d_{j}^{2}(x,y)=2(1-\rho _{j}(\langle \cos \theta \rangle ))
\]%
where $\theta :=\arccos \left\langle x,y\right\rangle $ is the usual
geodesic distance on the sphere, and%
\[
1-\rho _{j}(\cos \theta )=\frac{\sum_{\ell =2^{j-1}}^{2^{j+1}}b^{2}\left( 
\frac{\ell }{2^{j}}\right) \frac{(2l+1)}{4\pi }C_{\ell }\left( 1-P_{\ell
}(\cos \theta )\right) }{\sum_{\ell =2^{j-1}}^{2^{j+1}}b^{2}\left( \frac{%
\ell }{2^{j}}\right) \frac{(2\ell +1)}{4\pi }C_{\ell }}.
\]%
Now fix $\theta <1/(K\ell )$ and use Hilb's asymptotics (\cite{szego}) to
obtain%
\[
P_{\ell }(\cos \theta )=\sqrt{\frac{\theta }{\sin \theta }}\,\,J_{0}((\ell
+1/2)\theta )+\delta (\theta ),\,\,\,\,\,\,\,\,\,\,\delta (\theta )=O(\theta
^{2}),
\]%
where 
\[
J_{0}(z):=\sum_{k=1}^{\infty }(-1)^{k}\frac{x^{2k}}{2^{2k}(k!)^{2}}\text{ ,}
\]%
is the standard Bessel function of zeroth order; note also that 
\[
\lim_{K\rightarrow \infty }\sup_{\theta \leq (K\ell )^{-1}}\left\vert \frac{%
1-J_{0}((\ell +1/2)\theta )}{\ell ^{2}\theta ^{2}}-\frac{1}{4}\right\vert
=\lim_{K\rightarrow \infty }\sup_{x\leq K^{-1}}\left\vert \frac{1-J_{0}(x)}{%
x^{2}}-\frac{1}{4}\right\vert =0,
\]%
which means that for all $\delta >0,$ there exist $K_{\delta }$ small enough
so that%
\[
(\frac{1}{4}-\delta )\ell ^{2}\theta ^{2}\leq 1-J_{0}((\ell +1/2)\theta
)\leq (\frac{1}{4}-\delta )\ell ^{2}\theta ^{2},\,\,\text{ for all }\theta <%
\frac{K_{\delta }}{\ell }.
\]%
Combining these bounds with Hilb's asymptotics, we get for $\theta <\frac{%
K_{\delta }}{\ell }$ 
\[
(\frac{1}{4}-\delta )\ell ^{2}\theta ^{2}+O(\theta ^{2})\leq 1-P_{\ell
}(\cos \theta )\leq (\frac{1}{4}+\delta )\ell ^{2}\theta ^{2}+O(\theta ^{2}).
\]%
It follows that%
\begin{eqnarray*}
1-\rho _{j}(\cos \theta ) &=&\frac{\sum_{\ell =2^{j-1}}^{2^{j+1}}b^{2}\left( 
\frac{\ell }{2^{j}}\right) \frac{(2l+1)}{4\pi }C_{\ell }\left( 1-P_{\ell
}(\cos \theta )\right) }{\sum_{\ell =2^{j-1}}^{2^{j+1}}b^{2}\left( \frac{%
\ell }{2^{j}}\right) \frac{(2\ell +1)}{4\pi }C_{\ell }} \\
&\leq &\frac{1}{4}\theta ^{2}(2^{j+1}+1/2)^{2}+O(\theta ^{2})
\end{eqnarray*}%
and likewise%
\[
1-\rho _{j}(\cos \theta )\geq \frac{1}{4}\theta
^{2}(2^{j-1}+1/2)^{2}+O(\theta ^{2}),
\]%
thus implying that, for some constants $c_{1},c_{2}>0$ 
\[
c_{1}\theta ^{2}2^{2j}\leq 1-\rho _{j}(\cos \theta )\leq c_{2}\theta
^{2}2^{2j}.
\]%
We hence get%
\[
c_{1}^{\prime }\theta ^{2}\leq \frac{d_{j}^{2}(0,(\theta ,\phi ))}{\ell
_{j}^{2}}\leq c_{2}^{\prime }\theta ^{2},
\]%
and more generally for $\xi _{1},\xi _{2}\in S^{2}$%
\[
c_{1}^{\prime }d_{S}^{2}(\xi _{1},\xi _{2})\leq \frac{d_{j}^{2}(\xi _{1},\xi
_{2})}{\ell _{j}^{2}}\leq c_{2}^{\prime }d_{S}^{2}(\xi _{1},\xi _{2}),
\]%
where $d_{S}(\xi _{1},\xi _{2}):=\arccos (\left\langle \xi _{1},\xi
_{2}\right\rangle )$ is the standard spherical distance. Now for $%
\varepsilon <C$ and $\theta <\frac{C}{\ell }$, define the sequence of $d_{j}$%
-balls $B_{d_{j}}(\xi _{jk},\varepsilon )=\{u\in S^{2}:d_{j}(\xi
_{jk},u)\leq \varepsilon \},$ which can be rewritten as 
\[
B_{d_{j}}(\xi _{jk},\varepsilon )=\{\xi \in S^{2}:d_{j}(\xi _{jk},\xi )=\ell
_{j}d_{S}(\xi _{jk},\xi )\leq \varepsilon \}.
\]%
Hence $B_{d_{j}}(\xi _{jk},\varepsilon )$ is a spherical cap of radius $\sim 
\frac{\varepsilon }{\ell },$ with Euclidean volume 
\[
B_{d}(\xi _{jk},\varepsilon )\sim \frac{\varepsilon ^{2}}{\ell _{j}^{2}}.
\]%
It follows that the number of $d_{j}$-balls needed to cover the sphere is
asymptotic to $N_{j}(\varepsilon )\sim \frac{\ell _{j}^{2}}{\varepsilon ^{2}}%
.$ Consequently, by Theorem 1.3.3. of \cite{RFG}, for any $\delta \in (0,\pi
]$ there exists a universal constant $K^{\ast }$ such that 
\begin{eqnarray*}
{\mathbb{E}}\left( \sup_{x}\widetilde{\beta }_{j}(x)\right)  &\leq &K^{\ast
}\int_{0}^{\delta }\sqrt{\log N_{j}(\varepsilon )}\,d\varepsilon  \\
&\leq &K^{\ast }\left\{ \int_{0}^{C/\ell _{j}}\sqrt{\log N_{j}(\varepsilon )}%
\,d\varepsilon +\int_{C/\ell _{j}}^{\delta }\sqrt{\log N_{j}(\varepsilon )}%
\,d\varepsilon \right\} .
\end{eqnarray*}%
Clearly for $\varepsilon >C/\ell _{j}$ one has $N_{j}(\varepsilon )\leq
c\ell _{j}^{4},$ whence%
\[
\int_{C/\ell _{j}}^{\delta }\sqrt{\log N_{j}(\varepsilon )}\,d\varepsilon
\leq c^{\prime }\sqrt{4\log \ell _{j}}.
\]%
On the other hand 
\[
\int_{0}^{\delta }\sqrt{\log N_{j}(\varepsilon )}\,d\varepsilon
=\int_{0}^{\delta }\sqrt{2\log \left( \frac{\ell _{j}}{\varepsilon }\right) }%
\,d\varepsilon =\ell _{j}\int_{\sqrt{2\log \frac{\ell _{j}}{\delta }}%
}^{\infty }v^{2}\exp \left( -\frac{v^{2}}{2}\right) \,dv,
\]%
with the change of variables $\frac{\varepsilon }{\ell _{j}}=\exp \left(
-v^{2}/2\right) ,$ whence 
\begin{eqnarray*}
&&\int_{0}^{\delta }\sqrt{\log N_{j}(\varepsilon )}\,d\varepsilon  \\
&=&\ell _{j}\left( \left. (-1)v\exp \left( -\frac{v^{2}}{2}\right)
\right\vert _{\sqrt{2\log \frac{\ell _{j}}{\delta }}}^{\infty }+\int_{\sqrt{%
2\log \frac{l}{\delta }}}^{\infty }\exp \left( -\frac{v^{2}}{2}\right)
\,dv\right)  \\
&\leq &\ell _{j}\left( \frac{\delta }{\ell _{j}}\sqrt{2\log \left( \frac{%
\ell _{j}}{\delta }\right) }+\left( 2\log \left( \frac{\ell _{j}}{\delta }%
\right) \right) ^{-1/2}\frac{\delta }{\ell _{j}}\right)  \\
&=&\delta \left( \sqrt{2\log \left( \frac{\ell _{j}}{\delta }\right) }%
+\left( 2\log \left( \frac{\ell _{j}}{\delta }\right) \right) ^{-1/2}\right)
\leq c\sqrt{4\log \ell _{j}}.
\end{eqnarray*}%
Taking the same $C$ as in the entropy upper bound, and using the Borell-TIS
inequality (cf. \cite{RFG}) we have 
\begin{eqnarray*}
{\mathbb{P}}\left( \left\{ \sup_{x}\widetilde{\beta }_{j}(x)\right\}
>(C+\varepsilon )\sqrt{4\log \ell _{j}}\right)  &\leq &{\mathbb{P}}\left(
\Vert \widetilde{\beta }_{j}\Vert >E\Vert \widetilde{\beta }_{j}\Vert
+\varepsilon \sqrt{4\log \ell _{j}}\right)  \\
&\leq &\exp \left( -\frac{4\varepsilon ^{2}\log \ell _{j}}{2}\right)  \\
&=&\exp \left( -\log \ell _{j}^{2\varepsilon ^{2}}\right)  \\
&=&\frac{1}{\ell _{j}^{2\varepsilon ^{2}}}\rightarrow 0,\,\,\,\,\forall
\varepsilon >0
\end{eqnarray*}

Now for (\ref{1.3}), taking $\varepsilon >\frac{1}{\sqrt{2}}$ in the above
expression, we obtain summable probabilities, and then by a simple
application of the Borel-Cantelli Lemma we have that 
\[
{\mathbb{P}} \left( \limsup_{j}\frac{\Vert \widetilde{\beta }_{j}\Vert }{%
\sqrt{4\log \ell _{j}}}\geq C+\frac{1}{\sqrt{2}}\right) \leq
\lim_{j\rightarrow \infty }\sum_{j^{\prime }=j}^{\infty } {\mathbb{P}}
\left( \frac{\Vert \widetilde{\beta }_{j^{\prime }}\Vert }{\sqrt{4\log \ell
_{j^{\prime }}}}\geq C+\frac{1}{\sqrt{2}}\right) =0. 
\]
\end{proof}


\section{Discretization and Lower Bounds}

As explained in the Introduction, this Section is devoted to the proofs for
the lower bounds that follow.

\begin{proposition}
We have%
\begin{equation}
\liminf_{j}\frac{{\mathbb{E}}\left\{ \sup_{x\in S^{2}}\widetilde{\beta }%
_{j}(x)\right\} }{\sqrt{4\log \ell _{j}}}\geq 1.  \label{3h30b}
\end{equation}%
and%
\begin{equation}
{\mathbb{P}}\left( \liminf_{j}\frac{\left\{ \sup_{x\in S^{2}}\widetilde{%
\beta }_{j}(x)\right\} }{\sqrt{4\log \ell _{j}}}\geq 1\right) =1.
\label{3h30c}
\end{equation}
\end{proposition}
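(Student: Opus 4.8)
The plan is to establish the bound on the expectation (\ref{3h30b}) first, by \emph{discretizing} $S^{2}$ at the scale $1/\ell _{j}$ of the correlation length and comparing $\widetilde{\beta }_{j}$ on the resulting grid with a family of rescaled independent Gaussians; the almost sure statement (\ref{3h30c}) will then be deduced from (\ref{3h30b}) together with the Borell-TIS inequality and Borel-Cantelli. For (\ref{3h30b}), fix $\delta >0$ and work inside a fixed spherical cap $D$ of angular radius $\pi /4$. Confining the points to such a cap keeps every pairwise geodesic distance below $\pi /2$, and thus away from the antipodal region where the covariance is delicate. Inside $D$ I would choose a maximal family of points $\{\xi _{j1},\dots ,\xi _{jN_{j}}\}$ that are pairwise separated by geodesic distance at least $a/\ell _{j}$, with $a=a(\delta )$ to be taken large. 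A routine volume/packing argument gives $N_{j}\sim c_{a}\,\ell _{j}^{2}$, so that $\log N_{j}=2\log \ell _{j}+O(1)$ and hence $\sqrt{2\log N_{j}}\sim \sqrt{4\log \ell _{j}}$. The point of this construction is that the field is \emph{nearly independent} on the grid: since each pair has $\theta :=d_{S}(\xi _{jk},\xi _{jk^{\prime }})\in \lbrack a/\ell _{j},\pi /2]$ and $\sin \theta \geq (2/\pi )\theta $ on $[0,\pi /2]$, the classical Legendre bound $|P_{\ell }(\cos \theta )|\leq C(\ell \sin \theta )^{-1/2}$ (cf. \cite{szego}) yields $\rho _{j}(\cos \theta )\leq C(\ell _{j}\sin \theta )^{-1/2}\leq C^{\prime }/\sqrt{a}\leq \delta $ uniformly in $j$, provided $a$ is large enough.

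Granting this correlation bound, I would invoke the Sudakov-Fernique comparison (cf. \cite{RFG}). Let $g_{1},\dots ,g_{N_{j}}$ be i.i.d. standard Gaussians and set $Y_{k}=\sqrt{1-\delta }\,g_{k}$. Since $\widetilde{\beta }_{j}$ is zero-mean with unit variance and $\rho _{kk^{\prime }}\leq \delta $, for every $k\neq k^{\prime }$ one has $\E \left( Y_{k}-Y_{k^{\prime }}\right) ^{2}=2(1-\delta )\leq 2(1-\rho _{kk^{\prime }})=\E \left( \widetilde{\beta }_{j}(\xi _{jk})-\widetilde{\beta }_{j}(\xi _{jk^{\prime }})\right) ^{2}$, so that the comparison inequality applies and gives
\[
\E \left\{ \sup_{x\in S^{2}}\widetilde{\beta }_{j}(x)\right\} \geq \E \left\{ \max_{k\leq N_{j}}\widetilde{\beta }_{j}(\xi _{jk})\right\} \geq \E \left\{ \max_{k\leq N_{j}}Y_{k}\right\} =\sqrt{1-\delta }\,\E \left\{ \max_{k\leq N_{j}}g_{k}\right\} .
\]
Using the classical asymptotics $\E \max_{k\leq N}g_{k}=\sqrt{2\log N}\,(1+o(1))$ together with $\sqrt{2\log N_{j}}\sim \sqrt{4\log \ell _{j}}$, dividing by $\sqrt{4\log \ell _{j}}$ and letting $j\rightarrow \infty $ yields $\liminf_{j}\E \{\sup_{x}\widetilde{\beta }_{j}(x)\}/\sqrt{4\log \ell _{j}}\geq \sqrt{1-\delta }$; as $\delta >0$ is arbitrary, (\ref{3h30b}) follows.

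For the almost sure bound (\ref{3h30c}), fix $\epsilon >0$. By (\ref{3h30b}) we have $\E \{\sup_{x}\widetilde{\beta }_{j}(x)\}\geq (1-\epsilon /2)\sqrt{4\log \ell _{j}}$ for all large $j$, and the one-sided lower form of the Borell-TIS inequality (with $\sup_{x}\mathrm{Var}(\widetilde{\beta }_{j}(x))=1$) gives
\[
\p \left( \sup_{x}\widetilde{\beta }_{j}(x)<(1-\epsilon )\sqrt{4\log \ell _{j}}\right) \leq \p \left( \sup_{x}\widetilde{\beta }_{j}(x)<\E \sup_{x}\widetilde{\beta }_{j}-\frac{\epsilon }{2}\sqrt{4\log \ell _{j}}\right) \leq \exp \left( -\frac{\epsilon ^{2}}{2}\log \ell _{j}\right) =\ell _{j}^{-\epsilon ^{2}/2}.
\]
Since $\ell _{j}=2^{j}$, these probabilities are summable in $j$, so Borel-Cantelli gives $\liminf_{j}\sup_{x}\widetilde{\beta }_{j}(x)/\sqrt{4\log \ell _{j}}\geq 1-\epsilon $ almost surely; intersecting the events over $\epsilon =1/n$ yields (\ref{3h30c}).

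The main obstacle is the uniform-in-$j$ control of the correlations claimed above: one must guarantee that a fixed geodesic separation $a/\ell _{j}$ forces $\rho _{j}(\cos \theta )\leq \delta $ for \emph{all} admissible pairs simultaneously. This is precisely why I restrict the points to a cap of radius $\pi /4$, since the decay estimate $(\ell \sin \theta )^{-1/2}$ degenerates as $\theta \rightarrow \pi $; cutting off at $\theta \leq \pi /2$ removes that difficulty at the cost of only a constant factor in $N_{j}$, hence nothing in the leading $\sqrt{\log \ell _{j}}$ term. A secondary point, routine but worth stating carefully, is that the packing number genuinely satisfies $N_{j}\sim c_{a}\ell _{j}^{2}$ with $c_{a}$ depending only on the separation parameter $a=a(\delta )$, so that indeed $\log N_{j}\sim 2\log \ell _{j}$ and the constant $1$ in the lower bound is attained in the limit $\delta \rightarrow 0$.
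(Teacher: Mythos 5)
Your proposal is correct, and it reaches both (\ref{3h30b}) and (\ref{3h30c}) by a genuinely different route than the paper. Both arguments discretize the sphere, but the key lemmas differ at every step. The paper deliberately samples on a \emph{sparser} grid, a $2^{-j(1-\delta)}$-net with only $\sim \ell_j^{2(1-\delta)}$ points, precisely so that the needlet correlation inequality (Lemma 10.8 of \cite{marpecbook}, which relies on $b$ being $C^{\infty}$) makes all off-diagonal correlations $O(2^{-j\delta M})$ for arbitrary $M$; it then whitens the discrete vector via $\Sigma_j^{-1/2}$, bounds the eigenvalues of $\Sigma_j$, and invokes Berman's law of large numbers for the maximum, recovering the constant $1$ only because $\delta$ is arbitrary (the grid has $\ell_j^{2(1-\delta)}$ points). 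You instead keep the grid at the natural correlation scale $a/\ell_j$, so $N_j \asymp \ell_j^2$ and $\log N_j \sim 2\log \ell_j$ directly, accept the much weaker correlation bound $\rho_j \leq C/\sqrt{a} \leq \delta$ from Bernstein's estimate $|P_\ell(\cos\theta)| \leq C(\ell \sin\theta)^{-1/2}$, and replace whitening-plus-Berman by a single application of Sudakov--Fernique against the scaled i.i.d. family $\sqrt{1-\delta}\,g_k$ (correctly so: Sudakov--Fernique needs only centered processes and an increment comparison, so the variance mismatch $1$ versus $1-\delta$ is harmless --- this would fail if you tried to quote Slepian, which requires equal variances). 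For the almost sure statement the divergence is even sharper: the paper reruns the discrete computation (factorizing the i.i.d. maximum, Mill's inequality, plus the coupling error), whereas you deduce (\ref{3h30c}) directly from (\ref{3h30b}) via the lower tail of Borell--TIS and Borel--Cantelli, exactly mirroring the technique of the paper's upper-bound proposition. What your route buys is robustness and economy: it never uses needlet localization, so it applies to any bounded $b$ rather than smooth $b$, and the concentration argument for the a.s. part is shorter and self-contained. What the paper's route buys is finer information on the discrete maximum itself (Berman's theorem gives two-sided convergence in probability of the normalized discrete max to $1$), which is what its direct Borel--Cantelli computation then exploits. One point you should make explicit when writing this up: you need the \emph{lower-tail} (two-sided) form of the Borell--TIS inequality, i.e. concentration of $\sup_x \widetilde{\beta}_j(x)$ below its mean as well as above; this is indeed available in \cite{RFG}, but it is worth citing precisely since the paper itself only ever uses the upper tail.
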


\begin{proof}
We start showing that, for all $\delta >0$, 
\begin{equation}
\lim_{j}{\mathbb{P}} \left( \frac{\left\{ \sup_{x\in S^{2}}\widetilde{\beta }%
_{j}\right\} }{\sqrt{4\log \ell _{j}}}>1-\delta \right) =1.  \label{3h30a}
\end{equation}%
Note first that $\sup \widetilde{\beta }_{j}\geq \sup_{k}\widetilde{\beta }%
_{j,k}$, where $\{\widetilde{\beta }_{j,k}\}$ is any discrete sample taken
from $\widetilde{\beta }_{j}$. Now, let us choose a grid of points such that
the distance between them is at least $2^{-j(1-\delta )},$ for some $\delta
>0$ - e.g., a $2^{-j(1-\delta )}$-net, see \cite{bkmpBer}. Note that the
vectors $\beta _{j,\cdot }$ and $\widetilde{\beta }_{j,\cdot }$ both have
cardinality of order $2^{2j(1-\delta )}$. By using the correlation
inequality given in Lemma 10.8 of \cite{marpecbook}, we have 
\[
{\mathbb{E}}\widetilde{\beta }_{j,k}\widetilde{\beta }_{j,k^{\prime }}\leq 
\frac{C}{(1+2^{j\delta })^{M}}, 
\]%
where $M\in \mathbb{N}$ can be chosen arbitrarily large. The idea of the
proof is to approximate these subsampled coefficients by means of a
triangular array of Gaussian i.i.d. random variables, say $\widehat{\beta }%
_{j,k}$. More precisely, let $\Sigma _{j}$ be the covariance matrix of the
Gaussian vector $\widetilde{\beta }_{j,\cdot }$; then define $\widehat{\beta 
}_{j,\cdot }=\Sigma _{j}^{-1/2}\widetilde{\beta }_{j,\cdot }$, which is
clearly a vector of i.i.d. Gaussian variables, and let $\lambda _{j,\mathrm{%
max}}$ and $\lambda _{j,\mathrm{min}}$ be the largest and the smallest
eigenvalues of the matrix $\Sigma _{j}$. Then 
\[
\lambda _{j,\mathrm{max}},\lambda _{j,\mathrm{min}}=1+O(\varepsilon _{j}), 
\]
for a deterministic sequence $\left\{ \varepsilon _{j}\right\} $ which goes
to zero faster than any polynomial (nearly exponentially). Indeed 
\begin{eqnarray*}
\lambda _{j,\mathrm{max}} &=&\sup_{x}x^{\prime }\Sigma
_{j}x=\sup_{x}x^{\prime }\left( \Sigma _{j}-I+I\right) x=\sup_{x}x^{\prime
}\left( \Sigma _{j}-I\right) x+1 \\
&\leq &\ell _{j}^{4(1-\delta )}\frac{C_{M}}{1+2^{j\delta M}}+1,
\end{eqnarray*}%
where the bound follows crudely from the cardinality of the off-diagonal
terms in the matrix. Similarly, 
\begin{eqnarray*}
\lambda _{j,\mathrm{min}} &=&\inf_{x}x^{\prime }\Sigma
_{j}x=\inf_{x}x^{\prime }\left( \Sigma _{j}-I+I\right) x \\
&=&\inf_{x}x^{\prime }\left( \Sigma _{j}-I\right) x+1\geq 1-\sup |x^{\prime
}\left( \Sigma _{j}-I\right) x| \\
&\geq &1-\ell _{j}^{4}\frac{C_{M}}{1+\ell _{j}^{\delta M}}.
\end{eqnarray*}%
As a consequence, writing $\Vert \cdot \Vert _{2}$ for the Euclidean inner
product in the appropriate dimension we have 
\begin{eqnarray*}
{\mathbb{E}}\left( \sup |\widetilde{\beta }_{j,k}-\widehat{\beta }%
_{j,k}|\right) &\leq &\sqrt{{\mathbb{E}}\Vert \widetilde{\beta }_{j,\cdot }-%
\widehat{\beta }_{j,\cdot }\Vert _{2}^{2}} \\
&=&\sqrt{{\mathbb{E}}\Vert (I-\Sigma ^{-1/2})\widetilde{\beta }_{j,\cdot
}\Vert _{2}^{2}}\leq |1-\lambda _{\mathrm{max}}|\sqrt{E\Vert \widetilde{%
\beta }_{j,\cdot }\Vert _{2}^{2}} \\
&\leq &\ell _{j}^{4(1-\delta )}\frac{C_{M}}{1+\ell _{j}^{\delta M}}\cdot
\ell _{j}^{2}=\ell _{j}^{2+4(1-\delta )}\frac{C_{M}}{1+\ell _{j}^{\delta M}}%
=O(\ell _{j}^{6-\delta M}).
\end{eqnarray*}%
We can now exploit a classical result by Berman (\cite{berman}) to conclude
that 
\[
{\mathbb{P}} \left( \left|\frac{\sup_{k}\widetilde{\beta }_{j,k}}{\sqrt{%
4\log \ell _{j}}}-1\right|>\varepsilon \right) \rightarrow 0, 
\]%
as $j\rightarrow \infty $, for all $\varepsilon >0$. Thus (\ref{3h30a}) is
established; (\ref{3h30b}) follows immediately, given that $\delta $ is
arbitrary. To establish (\ref{3h30c}), we use again the Borel-Cantelli
Lemma, so that we need to prove that, for all $\varepsilon >0$ 
\[
\sum_{j} {\mathbb{P}} \left( \frac{\left\{ \sup_{x\in S^{2}}\widetilde{\beta 
}_{j}\right\} }{\sqrt{4\log \ell _{j}}}<1-\varepsilon \right) <\infty . 
\]%
Clearly 
\[
{\mathbb{P}} \left( \frac{\left\{ \sup_{x\in S^{2}}\widetilde{\beta }%
_{j}\right\} }{\sqrt{4\log \ell _{j}}}<1-\varepsilon \right) \leq {\mathbb{P}%
} \left( \frac{\sup_{k}\widetilde{\beta }_{j,k}}{\sqrt{4\log \ell _{j}}}%
<1-\varepsilon \right) , \text{ for all }j, 
\]%
whence it suffices to prove that 
\[
\sum_{j}{\mathbb{P}} \left( \frac{\sup_{k}\widetilde{\beta }_{j,k}}{\sqrt{%
4\log \ell _{j}}}<1-\varepsilon \right) <\infty . 
\]%
Now%
\[
{\mathbb{P}} \left( \frac{\sup_{k}\widetilde{\beta }_{j,k}}{\sqrt{4\log \ell
_{j}}}<1-\varepsilon \right) ={\mathbb{P}} \left( \frac{\sup_{k}\left( 
\widehat{\beta }_{j,k}-\widehat{\beta }_{j,k}+\widetilde{\beta }%
_{j,k}\right) }{\sqrt{4\log \ell _{j}}}<1-\varepsilon \right) 
\]%
\begin{eqnarray*}
&\leq &{\mathbb{P}} \left( \frac{\sup_{k}\widehat{\beta }_{j,k}-\sup_{k}%
\left( \widehat{\beta }_{j,k}-\widetilde{\beta }_{j,k}\right) }{\sqrt{4\log
\ell _{j}}}<1-\varepsilon \right) \\
&\leq &{\mathbb{P}} \left( \frac{\sup_{k}\widehat{\beta }_{j,k}-\sup_{k}%
\left\vert \widehat{\beta }_{j,k}-\widetilde{\beta }_{j,k}\right\vert }{%
\sqrt{4\log \ell _{j}}}<1-\varepsilon \right) \\
&=&{\mathbb{P}} \left( \frac{\sup_{k}\widehat{\beta }_{j,k}}{\sqrt{4\log
\ell _{j}}}<1-\varepsilon +\frac{\sup_{k}\left\vert \widehat{\beta }_{j,k}-%
\widetilde{\beta }_{j,k}\right\vert }{\sqrt{4\log \ell _{j}}}\right) \\
&\leq &{\mathbb{P}} \left( \frac{\sup_{k}\widehat{\beta }_{j,k}}{\sqrt{4\log
\ell _{j}}}<1-\frac{\varepsilon }{2}\right) +{\mathbb{P}} \left( \frac{%
\sup_{k}\left\vert \widehat{\beta }_{j,k}-\widetilde{\beta }%
_{j,k}\right\vert }{\sqrt{4\log \ell _{j}}}>\frac{\varepsilon }{2}\right) \\
&\leq &{\mathbb{P}} \left( \frac{\sup_{k}\widehat{\beta }_{j,k}}{\sqrt{4\log
\ell _{j}}}<1-\frac{\varepsilon }{2}\right) +O(\ell _{j}^{6-\varepsilon M}).
\end{eqnarray*}%
The second term above is clearly summable, for all fixed $\varepsilon >0,$
by simply taking $M$ large enough. To check summability of the first term we
write%
\[
{\mathbb{P}} \left( \frac{\sup_{k}\widehat{\beta }_{j,k}}{\sqrt{4\log \ell
_{j}}}<1-\frac{\varepsilon }{2}\right) =\prod\limits_{k}{\mathbb{P}} \left( 
\frac{\widehat{\beta }_{j,k}}{\sqrt{4\log \ell _{j}}}<1-\frac{\varepsilon }{2%
}\right) 
\]
\begin{eqnarray*}
&=&\left( {\mathbb{P}} \left( \widehat{\beta }_{j,1}<(1-\frac{\varepsilon }{2%
})\sqrt{4\log \ell _{j}}\right) \right) ^{\ell _{j}^{2}} \\
&=&\left( 1-{\mathbb{P}} \left( \widehat{\beta }_{j,1}>(1-\frac{\varepsilon 
}{2})\sqrt{4\log \ell _{j}}\right) \right) ^{\ell _{j}^{2}} \\
&\leq &\left( 1-\frac{1}{(1-\frac{\varepsilon }{2})\sqrt{4\log \ell _{j}}}%
\left( 1-\frac{1}{(1-\frac{\varepsilon }{2})\sqrt{4\log \ell _{j}}}\right)
\cdot \frac{1}{\ell _{j}^{2(1-\frac{\varepsilon }{2})^{2}}}\right) ^{\ell
_{j}^{2}} \\
&\leq &\left( 1-\frac{1}{2(1-\frac{\varepsilon }{2})\sqrt{4\log \ell _{j}}}%
\cdot \frac{1}{\ell _{j}^{2(1-\frac{\varepsilon }{2})^{2}}}\right) ^{\ell
_{j}^{2}},
\end{eqnarray*}%
where we have used Mill's inequality for standard Gaussian variables, ${%
\mathbb{P}}\left\{ Z>z\right\} \geq \frac{z}{1+z^{2}}\phi (z).$ Since $(1-%
\frac{\varepsilon }{2})^{2}<1$, this term decays exponentially, and it is
hence summable. The proof of (\ref{3h30c}) is hence concluded.
\end{proof}

\end{document}